\renewcommand{\epsilon}{\varepsilon}
\newcommand{\pP}{\mathbb{P}}
\newcommand{\cH}{\mathcal{H}}
\newcommand{\cC}{\mathcal{C}}
\newcommand{\cM}{\mathcal{M}}
\newcommand{\E}{\mathbb{E}}
\newcommand{\N}{\mathbb{N}}
\newcommand{\Z}{\mathbb{Z}}
\newcommand{\cE}{{\mathcal E}}
\newtheorem{theorem}{Theorem}[section]{\bf}{\it}
{\bf}{\it}
\newtheorem{proposition}[theorem]{Proposition}{\bf}{\it}
\newtheorem{corollary}[theorem]{Corollary}{\bf}{\it}
\newtheorem{lemma}[theorem]{Lemma}{\bf}{\it}
\title[The dominating colour of an infinite Pólya urn model]{The dominating colour of an infinite Pólya urn model}
\author[Erik ~Th\"ornblad]{Erik Th\"ornblad}
 \address{Department of Mathematics, Uppsala University, Box 480, S-75106 Uppsala, Sweden.}
 \email{erik.thornblad@math.uu.se}
 \date{\today}
\begin{document}
% \begin{abstract}
%   We study the growth rate of a clique in a random graph model defined as follows. At time $0$, start with an isolated vertex. Then, choose a vertex uniformly at random. With probability $p>1/2$, add a new vertex and connect it to the chosen vertex and all its neighbour. With probability $1-p$, remove all the edges of the chosen vertex. This graph consists of disjoint cliques. Using a new two--colour P\'{o}lya--type urn model with random replacement matrix, we show that, almost surely, there is a single clique that is eventually the largest. Using martingale methods we then show that a fixed clique grows like $N_m^{1/\beta}$, where $\beta=\frac{p}{2p-1}$ and $N_m$ is the number of vertices at time $m$. Since there is a single clique that is eventually the largest, this implies that the maximum degree of the graph grows like $N_m^{1/\beta}$.
% \end{abstract}

\begin{abstract}
 We study a P\'{o}lya--type urn model defined as follows. Start at time $0$ with a single ball of some colour. Then, at each time $n\geq 1$, choose a ball from the urn uniformly at random. With probability $1/2<p<1$, return the ball to the urn along with another ball of the same colour. With probability $1-p$, recolour the ball to a new colour and then return it to the urn. This is equivalent to the supercritical case of a random graph model studied by Backhausz and M\'{o}ri \cite{BackhauszMori, BackhauszMori2015} and Thörnblad \cite{Thornblad2015}. We prove that, with probability $1$, there is a dominating colour, in the sense that, after some random but finite time, there is a colour that always has the most number of balls. A crucial part of the proof is the analysis of an urn model with two colours, in which the observed ball is returned to the urn along with another ball of the same colour with probability $p$, and removed with probability $1-p$. Our results here generalise a classical result about the P\'{o}lya urn model (which corresponds to $p=1$).

\noindent \textbf{Keywords:} urn model, largest colour, random graphs, persistent hub.

\noindent \textbf{AMS subject classification:} 60G50, 60J80.
%\keywords{Urn Model, Dominating Colour, Random graphs, Persistent hub}
%\subjclass[2010]{60G50, 60J80}
\end{abstract}
\maketitle

%%%%%%%%%%%%%%%%%%%%%%%%%%%%%%%%%%%%%%%%%%%%%%%%%%%%%%%%%%%%%%%%%%%%%%%%
\section{Introduction}
%%%%%%%%%%%%%%%%%%%%%%%%%%%%%%%%%%%%%%%%%%%%%%%%%%%%%%%%%%%%%%%%%%%%%%%%
We study an urn model described as follows. At time $0$, start with a single ball of some colour. At each time step $n\geq 1$, choose a ball uniformly at random. 

\begin{enumerate}
 \item With probability $p$, return the ball to the urn along with another ball of the same colour.
  \item With probability $1-p$, recolour the ball with a new colour and then return it to the urn.
\end{enumerate} 
In this paper we will typically consider the case $p>1/2$, although the definition or the urn model makes sense for any $0\leq p \leq 1$. 
This urn model has a (countably) infinite number of colours. It also allows for the extinction of colours. If the last ball of a certain colour is recoloured, then this colour will never appear again in the urn. It is equivalent to the following random graph model studied in \cite{BackhauszMori, BackhauszMori2015, Thornblad2015}. Let $G_0$ be the graph with a single isolated vertex. Create $G_n$ from $G_{n-1}$ by doing one of the following steps.
\begin{enumerate}
 \item With probability $p$, do a \emph{duplication step}. Select a clique in $G_{n-1}$ with probability proportional to size, and introduce a new vertex to that clique.
  \item With probability $1-p$, do a \emph{deletion step}. Select a clique in $G_{n-1}$ with probability proportional to size and delete a vertex from the chosen clique. Then introduce a new clique with a single vertex.
\end{enumerate}
The equivalence of these models is clear once we identify each clique with a colour in the urn. 

Let us mention a few known results about the urn model, coming from \cite{BackhauszMori, BackhauszMori2015, Thornblad2015}. These results were originally proved in the random graph version, but transfer immediately to the urn version. The degree distribution is known to exhibit a phase transition from exponential decay to power law in the three regimes $0<p<1/2, p=1/2$ and $1/2<p<1$, referred to as the subcritical, critical and supercritical case, respectively. Knowledge of the degree distribution of the random graph model translates to knowing the almost sure limits of the quantities $\frac{U_{j,n}}{N_n}$, where $U_{j,n}$ is the number of colours at time $n$ with $j$ balls, and $N_n$ is the number of balls at time $n$. This was done for $p=1/2$ in \cite{BackhauszMori} and for the remaining cases in \cite{Thornblad2015}. Both exact and asymptotic results were found. Later Backhausz and M\'{o}ri \cite{BackhauszMori2015} revisited the model and determined bounds on the logarithmic growth rate of the maximal clique size of the graph, i.e. the number of balls of the leading colour. In particular, in the supercritical case $p>1/2$ they found that
\begin{align}
 \frac{p}{\beta}\leq \liminf_{n\to \infty}\frac{\log M_n}{\log N_n}\leq \liminf_{n\to \infty}\frac{\log M_n}{\log N_n}\leq  \frac{1}{\beta}, \label{eq:BM}
\end{align}
where $\beta=\frac{p}{2p-1}$, $M_n$ is the size of the leading colour at time $n$, and $N_n$ is the number of balls in the urn at time $n$. As we shall see later, this result can be strengthened to show that $M_n\sim \mu N_n^{1/\beta}$ for some random variable $\mu>0$, implying that the upper bound in \eqref{eq:BM} is the correct one. This was indeed the correct growth rate conjectured in \cite{BackhauszMori2015}. We remark that this result was originally put in terms of the maximal degree, which is equal to one less than the maximal clique size, which by the identification is equal to the number of balls of the leading colour.

Similar studies have been done for population models. Champagnat and Lambert \cite{ChampagnatLambert2012b} studied a population model in which individuals were given i.i.d. lifetime distributions and give birth at constant rate. Furthermore, individuals then mutate at constant rate and change allelic type. If the lifetime distribution has a unit point mass as $\infty$, births are at rate $p$ and mutations at rate $1-p$, then this corresponds to a continuous--time version of our model, where colours correspond to the alleles. Champagnat and Lambert achieved a number of convergence results, in all three regimes, about the oldest and most abundant families, mainly in expectation and distribution. By constrast, we shall derive almost sure results, but only in the supercritical regime. 

One of our main results is that, provided $p>1/2$, then, with probability $1$, there is some colour that after some random but finite time becomes \emph{dominant}, i.e. remains the colour with the most number of balls forever. A similar problem was studied by Khanin and Khanin \cite{KhaninKhanin2001}. They consider an urn model with $k$ colours, with a parameter $r>0$. Balls are added sequentially, and the probability of adding a ball of colour $1$ is proportional to $x^r$, where $x$ is the number of balls of colour $1$, etc. They show that for $r>1/2$ one of the colours will be eventually dominant almost surely, but for $r\leq 1/2$ the colours change leadership infinitely many times. Indeed, for $r>1$, all but finitely many of the new balls are of the same colour, creating a \emph{monopoly} of one of the colours. Similar results were achieved by Chung, Handjani and Jungreis \cite{ChungHandjaniJungreis2003} for an infinite urn model. This model works as follows (slightly rephrased to allow for more direct comparison to our model). With probability $p$, add a ball, the colour of which is chosen like in the Khanin/Khanin--model. With probability $1-p$, add a ball of a new colour. The difference to our urn model is that colours can never lose balls in the Chung/Handjani/Jungreis--model. Also, the number of balls in the Chung/Handjani/Jungreis--model grows deterministically, which makes analysis slightly easier. Although these models appear similar, qualititively they behave rather differently. For $r=1$ it is found that the number of colours of size $k$ in the Chung/Handjani/Jungreis--model decays like a power--law for all $0<p<1$. This should be contrasted with our model, where there is a phase transition from exponential decay to power law at $p=1/2$.

In the random graph interpretation of our model, the notion of a dominant colour should be seen as a variation of the notion of a \emph{persistent hub}, a concept considered by Dereich and Mörters \cite{DereichMorters2009} and by Galashin \cite{Galashin2014}. The latter considered the classical preferential attachment model and a variation thereof and showed that, almost surely, there is a vertex that after some random but finite time (and always thereafter) is the vertex of maximal degree in the graph. This vertex is called the persistent hub. As remarked in \cite{BackhauszMori2015}, our random graph model does not have a persistent hub on the level of vertices, since all vertices will be selected for deletion infinitely often, thus pushing their degree down to $0$. However, by using the correspondence between vertices in a clique and balls of a certain colour, we will see that there is a clique that almost surely is the largest one, i.e. a persistent clique--hub.

%We mention also Oliviera and Spencer  \cite{OlivieraSpencer2005}, who considered a super--linear preferential attachment tree and showed that almost surely there is a single vertex that eventually recieves all new vertices. This of course is a very strong form of the persistent hub, and corresponds to the notion of monopoly mentioned above.

The rest of this paper is outlined as follows. We will typically embed the discrete urn model in a corresponding continuous--time model. First we use the contraction method to determine the growth rate of a fixed colour. Then, to show that there is an eventually dominating colour, we follow the approach taken by Galashin \cite{Galashin2014}. His methods extrapolate to our setting without any significant changes. We start by observing the joint behaviour of two fixed colours and determine exactly the asymptotic distribution of the quantity $\frac{B_n}{W_n+B_n}$, where $B_n$ and $W_n$ are the number of balls of the two colours at time $n$, respectively. From this distribution we deduce that two colours can change relative leadership at most finitely many times. Furthermore, we can determine exactly the probability that one of the colours ever overtakes the other, which allows us to bound the probability that a colour with only one ball (a new colour) will overtake a colour with many balls (the leading colour). This probability will turn out to be sufficiently small, in the sense that we can apply the Borel--Cantelli lemma to show that colours with few balls overtake the currently leading colour only a finite number of times, with probability $1$. This, along with the fact that two fixed colours overtake each other at most a finite number of times, implies the existence of a dominating colour. This dominating colour must grow like some fixed colour, so we are able to determine the asymptotic growth rate of the largest colour of the urn. 

In line with \cite{BackhauszMori2015, Thornblad2015}, let us introduce the notation $\beta=\frac{p}{2p-1}$ and $\gamma=\frac{1-p}{p}$. We use the notation $a_n\stackrel{a.s.}{\sim} b_n$ for (possibly random) sequences $(a_n)_{n=1}^{\infty}$ and $(b_n)_{n=1}^{\infty}$ to mean $\lim_{i\to \infty}\frac{a_n}{b_n}\stackrel{a.s.}{=}1$ and $X\sim F$ to denote that the random variable $X$ has distribution $F$.

%%%%%%%%%%%%%%%%%%%%%%%%%%%%%%%%%%%%%%%%%%%%%%%%%%%%%%%%%%%%%%%%%%%
\section{Results}
%%%%%%%%%%%%%%%%%%%%%%%%%%%%%%%%%%%%%%%%%%%%%%%%%%%%%%%%%%%%%%%%%%%%
Inspired by Athreya's embedding scheme, see \cite[V.9]{AthreyaNeyBook}, we shall embed the discrete urn scheme in a continuous--time urn scheme. The continuous urn scheme is defined as follow. Each ball in the urn has two exponential clocks, one ringing at rate $1/2<p<1$ and the other at rate $1-p$. If the first clock rings, add to the urn another ball of the same colour. If the second clock rings, remove the ball from the urn and add a ball of a new colour. It is easy to see that the discrete process has the same transition probabilities as the continuous process. Moreover, whenever a new colour is created, it behaves like a birth--death process with birth rate $p$ and death rate $1-p$. We characterise the growth rate of such a birth--death process in the next lemma.

\begin{lemma}\label{lem:onecolour}
Let $(X(t))_{t\geq 0}$ be a continuous--time birth--death process with birth rates $p$ and death rates $1-p$. Then
\begin{align}
\frac{X(t)}{e^{(2p-1)(t)}}&\xrightarrow{a.s.} U,
\end{align}
where $U$ is a random variable with distribution $(1-\gamma)\Gamma\left(1,\frac{1}{\beta}\right)+\gamma \delta_0$.
\end{lemma}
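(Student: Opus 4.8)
The plan is to place the process inside the classical theory of linear (continuous-time) Markov branching processes. Write $\lambda=p$ and $\mu=1-p$ for the per-individual birth and death rates and $\alpha=\lambda-\mu=2p-1>0$ for the Malthusian parameter, and let $(\mathcal{F}_t)_{t\ge0}$ be the natural filtration. Applying the generator to the function $x\mapsto x$ gives $\frac{d}{dt}\E_1[X(t)]=\alpha\,\E_1[X(t)]$, hence $\E_1[X(t)]=e^{\alpha t}$, and then the branching and Markov properties yield $\E[X(t+s)\mid\mathcal{F}_t]=X(t)e^{\alpha s}$; that is, $M(t):=e^{-\alpha t}X(t)$ is a non-negative martingale. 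By the martingale convergence theorem $M(t)\to U$ almost surely for some $U\ge0$, which is the asserted convergence. Moreover, applying the generator to $x\mapsto x^2$ and solving the resulting linear ODE for the second moment gives $\operatorname{Var}_1(X(t))=\frac{\lambda+\mu}{\lambda-\mu}e^{\alpha t}(e^{\alpha t}-1)$, so that $\E[M(t)^2]=1+\frac{\lambda+\mu}{\lambda-\mu}(1-e^{-\alpha t})\le 1+\frac{1}{2p-1}$ uniformly in $t$; thus $(M(t))_{t\ge0}$ is $L^2$-bounded, hence uniformly integrable, and in particular $\E[U]=1$ --- consistent with the claimed mixture, whose mean is $(1-\gamma)\beta=1$.

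It remains to identify the law of $U$, which I would do through its Laplace transform $\phi(\theta):=\E[e^{-\theta U}]$, $\theta\ge0$. Since $M(t)\to U$ almost surely and $e^{-\theta M(t)}\le1$, bounded convergence gives $\phi(\theta)=\lim_{t\to\infty}\E\bigl[e^{-\theta M(t)}\bigr]=\lim_{t\to\infty}F\bigl(e^{-\theta e^{-\alpha t}},t\bigr)$, where $F(s,t)=\E_1[s^{X(t)}]$ is the probability generating function of the linear birth--death process. A standard computation with the Kolmogorov equations (see e.g.\ \cite{AthreyaNeyBook}) gives the closed form
\begin{equation*}
F(s,t)=\frac{\mu(s-1)-e^{-\alpha t}(\lambda s-\mu)}{\lambda(s-1)-e^{-\alpha t}(\lambda s-\mu)}.
\end{equation*}
Writing $\lambda s-\mu=\lambda(s-1)+\alpha$ and $s-1=e^{-\theta e^{-\alpha t}}-1=-\theta e^{-\alpha t}+O(e^{-2\alpha t})$, both numerator and denominator equal $-e^{-\alpha t}$ times a quantity converging to $\mu\theta+\alpha$, respectively $\lambda\theta+\alpha$, so that, using $\gamma=\mu/\lambda$ and $1-\gamma=\alpha/\lambda=1/\beta$,
\begin{equation*}
\phi(\theta)=\frac{\alpha+\mu\theta}{\alpha+\lambda\theta}=\gamma+(1-\gamma)\cdot\frac{1-\gamma}{(1-\gamma)+\theta}.
\end{equation*}
The right-hand side is precisely the Laplace transform of the distribution placing mass $\gamma$ at $0$ and mass $1-\gamma$ on an exponential variable of rate $1/\beta$, i.e.\ of $(1-\gamma)\Gamma(1,1/\beta)+\gamma\delta_0$; since the Laplace transform determines the law on $[0,\infty)$, the lemma follows. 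Note that the atom $\gamma\delta_0$ is read off directly, so there is no need to argue separately that $\{U=0\}$ coincides with the extinction event.

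This argument has no deep obstacle --- the statement is classical --- and the only step I expect to require genuine care is the passage to the limit in $F\bigl(e^{-\theta e^{-\alpha t}},t\bigr)$: since the numerator and denominator of $F$ each vanish to first order in $e^{-\alpha t}$ as $s\to1$, one must expand $s=e^{-\theta e^{-\alpha t}}$ to first order and verify that the $O(e^{-2\alpha t})$ remainders really do drop out of the ratio. If one prefers to avoid quoting the closed form of $F$, an equivalent route is to condition on the first jump of $X$: the branching property gives the distributional fixed-point identity $U\stackrel{d}{=}\mathbf{1}_{B}\,e^{-\alpha\tau}(U_1+U_2)$, where $\tau\sim\operatorname{Exp}(\lambda+\mu)$, $\pP(B)=\lambda/(\lambda+\mu)$, and $U_1,U_2$ are independent copies of $U$ independent of $(\tau,\mathbf{1}_B)$; this translates into the Riccati-type equation $\theta\phi'(\theta)=\beta\bigl(\phi(\theta)-1\bigr)\bigl(\phi(\theta)-\gamma\bigr)$ with $\phi(0)=1$ and $\phi'(0)=-1$, and separation of variables recovers the same $\phi$.
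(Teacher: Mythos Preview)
Your proof is correct, but you have taken a different route from the paper to identify the limiting law. Both arguments begin by recognising $M(t)=e^{-(2p-1)t}X(t)$ as an $L^2$--bounded non-negative martingale with unit mean, so that $M(t)\to U$ almost surely with $\E[U]=1$; they then diverge. The paper uses the \emph{contraction method}: from the first--jump decomposition it writes down the distributional fixed--point equation $U\stackrel{d}{=}e^{-(2p-1)\tau}Y(U'+U'')$ with $\tau\sim\mathrm{Exp}(1)$, $Y\sim\mathrm{Ber}(p)$, interprets the right--hand side as an operator $T$ on mean--one distributions with finite second moment, shows $T$ is a strict contraction in the Wasserstein metric (with factor $\sqrt{2p/(4p-1)}$), and then simply verifies that $(1-\gamma)\Gamma(1,1/\beta)+\gamma\delta_0$ is a fixed point of $T$ lying in that space. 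Your primary argument instead computes the Laplace transform directly from the classical closed--form generating function $F(s,t)$ of the linear birth--death process; this is more elementary and avoids setting up the metric framework, at the price of needing the explicit formula for $F$. Your suggested alternative---translating the same fixed--point identity into the Riccati equation $\theta\phi'(\theta)=\beta(\phi-1)(\phi-\gamma)$ and integrating---is a third, hybrid path: it uses the paper's recursive decomposition but analytic rather than metric tools. All three are valid; the paper's choice has the advantage of not requiring one to guess or look up the answer in advance, while yours is shorter if the explicit $F(s,t)$ is taken as known.
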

Note that $\delta_0$ denotes the distribution that places unit mass at $0$. The quantity $\gamma$ is the extinction probability (which can be found in many different ways).  After some work, one can show that this follows from the results in \cite[III.5]{AthreyaNeyBook}. We instead use the contraction method, which exploits the recursive structure of the process. We sketch the argument here, and refer the reader to \cite{RoslerRuschendorf2001, Ruschendorf2006} for further details and references.

\begin{proof}
 Let $\tau$ be the first event time in the process $X(t)$. With probability $p$, this event is a birth, and with probability $1-p$, it is a death (which then forces $X(t)=0$ for all $t> \tau$). This leads to the distributional equality
 \begin{align}\label{eq:dist}
  X(t) \stackrel{d}{=} \mathbbm{1}_{\{\tau \leq t\}}Y\left(X'(t-\tau)+X''(t-\tau) \right) + \mathbbm{1}_{\{\tau > t\}},
 \end{align}
where $X(t), X'(t)$ and $X''(t)$ are independent and identically distributed, $Y\sim \text{Ber}(p)$ and $\tau\sim \text{Exp}(1)$. Normalising we obtain
 \begin{align}
   \frac{X(t)}{e^{(2p-1)t}} \stackrel{d}{=} e^{-(2p-1)\tau}\mathbbm{1}_{\{\tau \leq t\}}Y \left(\frac{X'(t-\tau)}{e^{(2p-1)(t-\tau)}}+\frac{X''(t-\tau)}{e^{(2p-1)(t-\tau)}} \right) + \frac{1}{e^{(2p-1)t}}\mathbbm{1}_{\{\tau > t\}}.
  \end{align}
 Note that $\mathbbm{1}_{\{\tau > t\}} \stackrel{a.s.}{\to} 0 $ as $t\to \infty$; similarly $\mathbbm{1}_{\{\tau \leq t\}}\stackrel{a.s.}{\to}1 $ as $t\to \infty$. It can be shown that $X(t)/e^{(2p-1)t}$ is a non--negative martingale, so by the martingale convergence theorem it converges almost surely to some random variable $U$, which then must satisfy the distributional equality
 \begin{align}
  U \stackrel{d}{=} e^{-(2p-1)\tau}Y\left(U'+U'' \right), \label{eq:Udist}
 \end{align}
 where $U,U',U''$ are independent and identically distributed, $Y\sim \text{Ber}(p)$ and $\tau \sim \text{Exp}(1)$. 

Consider the space $\cM$ of distributions with finite second moment and first moment equal to $1$, equipped with the Wasserstein metric
\begin{align}
 d(\lambda_1,\lambda_2)=\inf \|Z_1-Z_2 \|_2 
\end{align}
where the infimum is over all random variables $Z_1$ and $Z_2$ with $Z_1\sim \lambda_1$ and $Z_2\sim \lambda_2$ and $\| \cdot \|_2$ denotes the $L_2$--norm. Let $T:\cM\to \cM$ be the distributional operator $TZ \stackrel{d}{=}e^{-(2p-1)\tau}Y\left(Z'+Z'' \right)$, with $\tau, Y$ independent and like above, and $Z',Z''$ independent and distributed like $Z$. Note in particular that $TZ\in \cM$ for any $Z\in \cM$, so this operator is well--defined. It can be shown that $\E[2e^{-2(2p-1)\tau}Y^2]=\sqrt{\frac{2p}{4p-1}}<1$ is a contracting factor in the Wasserstein metric for the operator $T$. By the Banach fixed point theorem, it follows that the \eqref{eq:Udist} has a unique solution in $\cM$, see e.g. \cite[Theorem 2.2]{Ruschendorf2006}. Moreover, it can be verified that the distribution $(1-\gamma)\Gamma(1,1/\beta)+\gamma\delta_0$ lies in $\cM$ and satisfies $\eqref{eq:Udist}$, so this must be the unique solution. It suffices now to show that the distribution of $U$ lies in $\cM$, i.e. that $\E[U]=1$ and $\E[U^2]<\infty$. It follows from \cite[III.4--5]{AthreyaNeyBook} that
\begin{align}
 \text{Var}(X(t))=\frac{e^{2(2p-1)t}(1-e^{-(2p-1)t})}{2p-1},
\end{align}
which implies that $\text{Var}(X(t)/e^{(2p-1)t})\leq \frac{1}{2p-1}$ for all $t\geq 0$ and in particular that the second moment of the limiting random variable $U$ is finite. Therefore the first moment of $U$ is also finite, and it follows that  $\E[U]=1$ since $U$ is the limit of a martingale sequence with expectation one. Therefore the distribution of the limiting random variable must be in $\cM$, so the contraction method shows that $U\sim (1-\gamma)\Gamma(1,1/\beta)+\gamma\delta_0$.

\end{proof}

Using this we can relate the growth rate of a fixed colour to the growth rate of the entire urn.

\begin{theorem}\label{thm:asymp}
 Let $X_n$ be the size of a fixed colour at time $n$. Conditional on survival of this colour, there exists a positive random variable $\nu$ such that
\begin{align}
 X_n \stackrel{a.s.}{\sim} \nu N_n^{1/\beta}.
\end{align}
\end{theorem}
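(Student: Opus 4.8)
The plan is to carry out all of the analysis inside the continuous--time embedding and then pull the conclusion back to the discrete clock. I would begin by recording two purely multiplicative growth rates. First, once the fixed colour has been created it evolves exactly as a birth--death process with birth rate $p$ and death rate $1-p$, so Lemma~\ref{lem:onecolour} applies and gives $X(t)e^{-(2p-1)t}\xrightarrow{a.s.}U$ with $U$ as in that lemma; since $U$ has an atom of mass $\gamma$ at $0$ accounting exactly for the extinction probability, conditioned on survival of the colour we have $U>0$ almost surely. Second, I would observe that a recolouring event removes one ball and simultaneously creates one, hence leaves the total number of balls unchanged; only duplication events increase $N$, and each ball carries an independent exponential clock of rate $p$ triggering such an event. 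By the lack of memory of the exponential, the total ball count $N(t)$ is therefore a Markov chain jumping from $n$ to $n+1$ at rate $pn$, i.e.\ a Yule process of per--capita rate $p$ started from $1$, so the classical pure--birth asymptotics (see e.g.\ \cite[III.5]{AthreyaNeyBook}) give $N(t)e^{-pt}\xrightarrow{a.s.}W$ for a random variable $W$ that is almost surely strictly positive (in fact $W\sim\mathrm{Exp}(1)$).

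Combining the two and using that $\beta=\frac{p}{2p-1}$ forces $\frac1\beta=\frac{2p-1}{p}$, and hence $e^{-(2p-1)t}=\bigl(e^{-pt}\bigr)^{1/\beta}$, I obtain
\begin{align}
 \frac{X(t)}{N(t)^{1/\beta}}=\frac{X(t)e^{-(2p-1)t}}{\bigl(N(t)e^{-pt}\bigr)^{1/\beta}}\xrightarrow{a.s.}\frac{U}{W^{1/\beta}}=:\nu ,
\end{align}
and on the survival event $\nu>0$ almost surely, since there $U>0$ while $W>0$ always (the distribution of $\nu$ is not needed here, though in principle it could be read off from those of $U$ and $W$).

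It then remains to transfer this statement from continuous to discrete time. By construction, the embedding realises the discrete urn at step $n$ as the continuous urn observed immediately after its $n$-th event, at a random time $T_n$; in particular $X_n=X(T_n)$ and $N_n=N(T_n)$ on the same probability space. Since each step changes the ball count by at most one we have $N_n\le n+1$, so $\sum_n N_n^{-1}=\infty$, which is precisely the non--explosion condition guaranteeing $T_n\to\infty$ almost surely. Evaluating the displayed limit along the sequence $T_n$ then yields $X_n\stackrel{a.s.}{\sim}\nu N_n^{1/\beta}$, conditional on survival, which is the assertion.

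The two points that require genuine (if elementary) care are the claim that $N(t)$ is literally a Yule process — one must check that the ``delete a ball, add a fresh ball'' bookkeeping of a recolouring event does not disturb the memorylessness used to read off the jump rate $pn$ — and the non--explosion statement $T_n\to\infty$, which is exactly what makes the final transfer legitimate. Everything else is an immediate consequence of Lemma~\ref{lem:onecolour} and the standard asymptotics of the Yule process.
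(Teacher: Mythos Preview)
Your proposal is correct and follows essentially the same route as the paper: embed in continuous time, use Lemma~\ref{lem:onecolour} for the fixed colour, identify the growth of $N(t)$ as $e^{pt}$ times a positive limit, take the ratio, and pull back along the jump times. The only cosmetic differences are that you argue directly that $N(t)$ is a Yule process of rate $p$ (the paper instead cites \cite{BackhauszMori2015} for $N(t)\stackrel{a.s.}{\sim}e^{pt}V$ with $V\sim\Gamma(1,1)$, which is the same statement), and your non--explosion argument via $N_n\le n+1$ and $\sum N_n^{-1}=\infty$ is if anything cleaner than the paper's coupling to a rate--$1$ pure birth process.
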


\begin{proof}
We make use of the continuous--time embedding described earlier. For this purpose, let $X(t)$ be the number of balls of a fixed colour at time $t$ in the corresponding continuous model. Suppose this colour was born at time $t_0$. By Lemma \ref{lem:onecolour}, conditional on survival, we have $X(t)\stackrel{a.s.}{\sim}  e^{(2p-1)(t-t_0)}U$, where $U\sim \Gamma(1,1/\beta)$. Denoting by $N(t)$ the total number of balls by time $t$, similarly one can show that  $N(t) \stackrel{a.s.}{\sim} e^{pt} V$ where $V\sim \Gamma(1,1)$. This is done in \cite{BackhauszMori2015}. Therefore, conditional on survival of the fixed colour, we have $X(t)\sim \nu N(t)^{\frac{2p-1}{p}}$ almost surely, where $\nu$ is a positive random variable (that depends on $t_0, U$ and $V$). 

Now, let $T_n$ be the $n$:th birth or death of the process $(X(t))_{t\geq 0}$. Observing the values $(X(T_n))_{n=1}^{\infty}$ gives us the discrete urn process, so it suffices to show that $T_n\xrightarrow{a.s} \infty$ as $n\to \infty$.  This is well--known if $p=1$, see e.g. \cite[III]{AthreyaNeyBook}, and is equivalent to showing that the process does not explode in finite time. But we can couple our birth--death process $X(t)$ for $1/2<p<1$ to a pure birth process with $p=1$, in such a way that it grows slower. Hence it also cannot explode in finite time, so $T_n\xrightarrow{a.s} \infty$ on the event of non--extinction. Recall now that $\frac{2p-1}{p}=\frac{1}{\beta}$. Then $X_n\stackrel{a.s.}{\sim} \nu N_n^{1/\beta}$.
\end{proof}

As mentioned, a colour survives with probability $1-\gamma$ and goes extinct with probability $\gamma$, so this gives us the following corollary.

\begin{corollary}
 Let $X_n$ be the size of a fixed colour at time $n$. Then there exists a positive random variable $\nu>0$ such that
 \begin{align}
 X_n \stackrel{a.s.}{\sim} \begin{cases} 0, & \text{ with probability } \gamma ,\\
           \nu N_n^{1/\beta}, & \text{ with probability } 1-\gamma.
          \end{cases}
\end{align}
\end{corollary}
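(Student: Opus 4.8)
The plan is to read the corollary off from Theorem~\ref{thm:asymp} together with Lemma~\ref{lem:onecolour} by conditioning on whether the fixed colour survives. First I would recall that Lemma~\ref{lem:onecolour} gives the almost sure limit $U$ of $X(t)/e^{(2p-1)t}$ with distribution $(1-\gamma)\Gamma(1,1/\beta)+\gamma\delta_0$. The atom of mass $\gamma$ at $0$ identifies the extinction event with $\{U=0\}$, so the fixed colour becomes extinct with probability $\gamma$ and survives with probability $1-\gamma$. (Alternatively, $\gamma=(1-p)/p$ is the classical gambler's ruin probability for the embedded discrete random walk, which moves up with probability $p$ and down with probability $1-p$ from state $1$; this is the ``found in many different ways'' remark after Lemma~\ref{lem:onecolour}.)

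On the extinction event, the colour loses its last ball after an almost surely finite time and never reappears, so $X_n=0$ for all sufficiently large $n$; this is the content of the first line, interpreting $\stackrel{a.s.}{\sim}0$ as $X_n\to 0$ (indeed as $X_n$ being eventually $0$). On the complementary survival event, Theorem~\ref{thm:asymp} applies directly and produces a positive random variable $\nu$ with $X_n\stackrel{a.s.}{\sim}\nu N_n^{1/\beta}$. Combining the two cases and weighting by the probabilities $\gamma$ and $1-\gamma$ yields the stated dichotomy.

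The only point requiring a word of care is that ``extinction'' was defined through the continuous--time embedding (the birth--death process $X(t)$ absorbed at $0$), and one must check this matches extinction of the discrete urn colour. But this is already handled by the non--explosion argument in the proof of Theorem~\ref{thm:asymp}: since $T_n\xrightarrow{a.s.}\infty$, the jump times $(T_n)$ exhaust $[0,\infty)$, so $X_n=X(T_n)$ is eventually $0$ precisely when $X(t)$ is. Hence there is no genuine obstacle here; the corollary is essentially a bookkeeping consequence of Lemma~\ref{lem:onecolour} and Theorem~\ref{thm:asymp}, and the ``hardest'' ingredient is the identification of $\gamma$ as the extinction probability, which is already in place.
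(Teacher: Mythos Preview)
Your proposal is correct and follows exactly the paper's line: the corollary is stated there as an immediate consequence of the fact that a colour survives with probability $1-\gamma$ (so Theorem~\ref{thm:asymp} applies) and goes extinct with probability $\gamma$ (so $X_n$ is eventually $0$), with $\gamma$ identified as the extinction probability right after Lemma~\ref{lem:onecolour}. Your extra remark reconciling continuous-- and discrete--time extinction via $T_n\to\infty$ is a reasonable elaboration, though the paper does not spell it out.
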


Let us now turn to the joint behaviour of two fixed colours. To do this, we consider the projection of the entire urn onto two colours. That is, we study an urn model with balls of two colours, black and white say. Initially there are $B_0=b$ black balls and $W_0=w$ white balls. Sequentially sample balls from the urn, uniformly at random. With probability $1/2<p<1$, return the ball to the urn with a ball of the same colour, and with probability $1-p$ remove the urn from the urn with negative probability. To avoid degeneracies we stop the evolution of the urn if one of the colours disappear from the urn. Conditional on $B_n\neq 0, W_n\neq 0$, the transition probabilities are given by
\begin{align}
 (B_{n+1},W_{n+1})=
\begin{cases}
 (B_n+1,W_n) & \text{ with probability } p\frac{B_n}{B_n+W_n} \\
 (B_n,W_n+1)& \text{ with probability } p\frac{W_n}{B_n+W_n} \\
 (B_n-1,W_n) & \text{ with probability } (1-p)\frac{B_n}{B_n+W_n}\\
 (B_n,W_n-1) & \text{ with probability } (1-p)\frac{W_n}{B_n+W_n} \\
\end{cases}
\end{align}
A similar urn model was studied in \cite{Isanovaetal}, where the urn came with an additional immigration procedure to ensure that both colours survive. We mention the paper \cite{Janson2004}, that to a large extent solved the case of so--called \emph{tenable} urn models. Our urn model does not fall into this category (in particular the condition of irreducibility is not satisfied, i.e. that any configuration of the urn can be reached from any starting configuration). However, the process  $(B_n,W_n)_{n=0}^{\infty}$ can be seen as a triangular urn scheme with random replacement matrix, allowing for non--negative entries. Triangular urn schemes with deterministic replacement matrix were considered by Janson \cite{Janson2005}, the results of which were extended by Aguech \cite{Aguech2009} to random triangular replacement matrices. The latter however imposed that the random variables be non--negative, to ensure the survival of the urn. Our urn model does not have non--negative entries; however, the condition $p>1/2$ implies that the replacement distribution has positive expectation, so the colours (and the urn) survive with positive probability.

We are interested in the joint behaviour of two colours, or more precisely the proportion
\begin{align}
f_{b,w}(n)= \frac{B_n}{W_n+B_n}.
\end{align}
As mentioned, we stop whenever $f_{b,w}(n)=0$ or $f_{b,w}(n)=1$, so these are absorbing states of the process $(f_{b,w}(n))_{n=0}^{\infty}$. Alternatively, instead of stopping the process here, we could condition on being on the event of non--extinction, i.e. that not all balls disappear from the urn. The probability of this event, which is positive, will appear implicitly later on.

We shall again use the continuous time embedding to evaluate the evolution of the urn. That is, we consider indepndent black and white birth--death process $(B_i(t))_{i=1}^{b}$ and  $(W_i(t))_{i=1}^{w}$  started at $B_i(0)=W_i(0)=1$, with birth rates $p$ and death rates $1-p$. It is again easy to show that the discrete process $(B_n,W_n)_{n=0}^{\infty}$has the same transition probabilities as the continuous process $\left(\sum_{i=1}^{b}B_i(t),\sum_{i=1}^{w}W_i(t) \right) _{t\geq 0}$, so we will study the quantity
\begin{align}
 g_{b,w}(t)=\frac{\sum_{i=1}^{b}B_i(t)}{\sum_{i=1}^{b}B_i(t)+\sum_{i=1}^{w}W_i(t)}
\end{align}
instead of its discrete analogue $f_{b,w}(n)$. Again we stop the process if we ever reach $g_{b,w}(t)=0$ or $g_{b,w}(t)=1$. Since the processes are equivalent, we will be able to show that almost sure convergence of $g_{b,w}(t)$ implies almost sure convergence of $f_{b,w}(n)$ to the same limit.

% Namely, suppose we observe the urn at time $n_0$, and that the urn contains $B_{n_0}=b$ black balls, $W_{n_0}=w$ white balls and $R_{n_0}=r$ red balls. We use the continuous time embedding described earlier to consider evaluate the future evolution of the urn. That is, we consider independent black and white birth--death processes processes $(B_i(t))_{i=1}^{b}$ and  $(W_i(t))_{i=1}^{w}$  started at $B_i(0)=W_i(0)=1$, with birth rates $p$ and death rates $1-p$, and additionally a red process $(R_i(t))_{i=1}^{r}$, independent from the other, which is a birth process with rate $p$, with immigration of a new particle whenever a black or white process experiences a death. It is easy to show that the discrete process $(B_n,W_n,R_n)_{n=0}^{\infty}$ has the same transition probabilities as the continuous process $\left(\sum_{i=1}^{b}B_i(t),\sum_{i=1}^{w}W_i(t),\sum_{i=1}^{r}R_i(t) \right) _{t\geq 0}$, so we will study the quantity
% \begin{align}
%  g_{b,w}(t)=\frac{\sum_{i=1}^{b}B_i(t)}{\sum_{i=1}^{b}B_i(t)+\sum_{i=1}^{w}W_i(t)}
% \end{align}
% instead of its discrete analogue $f_{b,w}(n)$. Again we stop the process if we ever reach $g_{b,w}(t)=0$ or $g_{b,w}(t)=1$. Since the processes are equivalent, we will be able to show that almost sure convergence of $g_{b,w}(t)$ implies almost sure convergence of $f_{b,w}(n)$ to the same limit.

\begin{proposition}\label{prop:limdist}
The limit $\lim_{t\to \infty}g_{b,w}(t)$ exists almost surely, and its distribution is given by the mixture 
\begin{align}
 r_{b,w}\delta_0 + r^{\ast}_{b,w}\text{Beta}(G_b,H_w) + r_{w,b}\delta_1,
\end{align}
where $r_{b,w}=\gamma^b\left(1-\frac{b}{b+w}\gamma^{w}\right)$, $r_{w,b}=\gamma^w\left(1-\frac{w}{b+w}\gamma^{b}\right)$, $r^{\ast}_{b,w}=(1-\gamma^b)(1-\gamma^w)$, and $G_b, H_w$  are independent discrete random variables with probability mass functions
\begin{align}
 \pP[G_b=k]&=\frac{1}{1-\gamma^{b}}\binom{b}{k}(1-\gamma)^k \gamma^{b-k},  &k=1,\dots, b, \\
 \pP[H_w=k]&=\frac{1}{1-\gamma^{w}}\binom{w}{k}(1-\gamma)^k \gamma^{w-k},  &k=1,\dots, w,
\end{align}
i.e. binomial random variables conditioned on not being zero.
\end{proposition}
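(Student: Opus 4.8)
The plan is to exploit the continuous--time embedding together with Lemma~\ref{lem:onecolour}. By that lemma, the $b+w$ independent birth--death processes satisfy $B_i(t)e^{-(2p-1)t}\xrightarrow{a.s.}U_i$ and $W_j(t)e^{-(2p-1)t}\xrightarrow{a.s.}V_j$, where the $U_i,V_j$ are i.i.d.\ with the mixture law $(1-\gamma)\Gamma(1,1/\beta)+\gamma\delta_0$; moreover $\{U_i>0\}$ coincides almost surely with the event that the $i$--th process survives, since the $\Gamma$ component carries no atom at the origin, and likewise for the $V_j$. Dividing numerator and denominator of $g_{b,w}(t)$ by $e^{(2p-1)t}$ and letting $t\to\infty$ shows that, on the event that not all $b+w$ processes die out, $g_{b,w}(t)$ converges almost surely to $\frac{S_b}{S_b+S_w}$ with $S_b=\sum_{i=1}^bU_i$ and $S_w=\sum_{j=1}^wV_j$; on the complementary event that every process dies out, the stopped process is frozen at whichever of $0$ or $1$ it last attained. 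This gives almost sure existence of the limit, and it remains to identify its distribution, splitting into the masses at $0$ and $1$ and the interior part.

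For the boundary masses, write $T_B$ (resp.\ $T_W$) for the time at which the last surviving black (resp.\ white) process dies, with $T_B=\infty$ if some black process survives, and similarly $T_W$. Matching this against the stopping rule of $g_{b,w}$, the limit is $0$ precisely on $\{T_B<T_W\}$, it is $1$ precisely on $\{T_W<T_B\}$, and it lies in $(0,1)$ precisely on $\{T_B=T_W=\infty\}$, an event of probability $(1-\gamma^b)(1-\gamma^w)=r^{\ast}_{b,w}$ by independence. Then $\pP[T_B<T_W]=\pP[T_B<\infty=T_W]+\pP[T_B<T_W<\infty]=\gamma^b(1-\gamma^w)+\gamma^{b+w}\,\pP[T_B<T_W\mid \text{all }b+w\text{ extinct}]$, using that $\{T_B<\infty,\,T_W<\infty\}$ is exactly the all--extinct event and has probability $\gamma^{b+w}$. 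Since conditioning the $b+w$ i.i.d.\ processes on their own (independent) extinction events leaves them i.i.d., and the extinction time of a birth--death process conditioned on extinction has a density (so ties occur with probability $0$), the global maximum among the $b+w$ conditioned extinction times is equally likely to be attained at each of them, giving $\pP[T_B<T_W\mid \text{all extinct}]=\frac{w}{b+w}$. After simplification this yields $\pP[\lim g_{b,w}=0]=\gamma^b-\frac{b}{b+w}\gamma^{b+w}=r_{b,w}$, and symmetrically $\pP[\lim g_{b,w}=1]=r_{w,b}$; the identity $r_{b,w}+r^{\ast}_{b,w}+r_{w,b}=1$ confirms the three cases exhaust.

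For the interior part I would condition on $\{T_B=T_W=\infty\}$. The events "at least one black survives" and "at least one white survives" are independent, and the number of surviving black (resp.\ white) processes is $\text{Bin}(b,1-\gamma)$ (resp.\ $\text{Bin}(w,1-\gamma)$); conditioning on the interior event turns these into the independent variables $G_b$ and $H_w$ with the stated pmfs (binomials conditioned on being positive). Conditionally on $G_b=k$ and $H_w=l$, the limits $U_i$ of the $k$ surviving black processes are i.i.d.\ $\Gamma(1,1/\beta)$ and independent of the analogous white variables, so $S_b\sim\Gamma(k,1/\beta)$ and $S_w\sim\Gamma(l,1/\beta)$ are independent, whence the classical identity for ratios of independent Gammas with a common rate gives $\frac{S_b}{S_b+S_w}\sim\text{Beta}(k,l)$. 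Mixing over $k$ and $l$ shows that, conditioned on the interior event, $\lim g_{b,w}\sim\text{Beta}(G_b,H_w)$, and combining with the previous paragraph produces the claimed mixture.

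I expect the main obstacle to be the bookkeeping around the absorbing boundary: carefully matching the stopping rule for $g_{b,w}$ with the events $\{T_B<T_W\}$ and $\{T_W<T_B\}$, and in particular treating the event that all $b+w$ processes eventually die, where the exchangeability argument for the global maximum of the conditioned extinction times is exactly what produces the corrections $\frac{b}{b+w}$ and $\frac{w}{b+w}$ in $r_{b,w}$ and $r_{w,b}$. A secondary point needing care is the justification that $\{U_i>0\}$ agrees almost surely with survival and that the surviving $U_i$ are genuinely i.i.d.\ $\Gamma(1,1/\beta)$ and independent of the white variables; both follow from Lemma~\ref{lem:onecolour} and the fact that the $\Gamma$ component has no mass at $0$, together with the independence of the $b+w$ underlying processes.
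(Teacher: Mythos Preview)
Your proposal is correct and follows essentially the same route as the paper: you split according to the extinction times $T_B,T_W$ (the paper's $\tau_b,\sigma_w$), use the exchangeability of the $b+w$ conditioned extinction times to get the $\frac{w}{b+w}$ factor in $r_{b,w}$, and on the joint survival event identify the limit via Lemma~\ref{lem:onecolour} and the Gamma--to--Beta ratio. If anything, you are slightly more explicit than the paper on two points it leaves implicit: that conditioned extinction times have a density (so ties are null), and that $\{U_i>0\}$ coincides a.s.\ with survival because the $\Gamma$ component has no atom at $0$.
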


\begin{proof}
Let $\tau_b=\inf\{t \ : \ \sum_{i=1}^b B_i(t)=0 \}$ and $\sigma_w = \inf\{t \ : \ \sum_{i=1}^{w} W_i(t)=0 \}$ be the extinction times of the two processes. The event that the white process dies out is $\{\tau_w<\infty\}$, and similar for the black process. Note in particular that these events are independent.

On the event $\{\tau_b<\sigma_w<\infty\}\cup \{\tau_b<\sigma_w=\infty\}$ we have $g_{b,w}(\tau_b)=0$. The first event occurs with probability $\frac{w}{w+b}\gamma^{b+w}$, since the probability that one of the $w$ white processes is the last one to die is $\frac{w}{w+b}$ (conditional on all $b+w$ processes dying), by symmetry, and both processes die with probability $\gamma^{b+w}$, by independence. The second event occurs with probability $\gamma^b(1-\gamma^w)$, and the sum of these probabilities is $r_{b,w}$. By symmetry, the probability of the event $\{\sigma_w<\tau_b<\infty\}\cup \{\tau_b<\sigma_w=\infty\}$ is $r_{w,b}$.

It is clear that $g_{b,w}(\tau_b)=0$ on $\{\tau_b<\sigma_w<\infty\}\cup \{\tau_b<\sigma_w=\infty\}$. Since $0$ is an absorbing state, this gives a point mass at $0$ with relative mass $r_{b,w}$. Similarly $g_{b,w}(\sigma_w)=1$ on $\{\sigma_w<\tau_b<\infty\}\cup \{\tau_b<\sigma_w=\infty\}$, giving a point mass at $1$ with relative mass $r_{w,b}$. 

The event that neither process dies out (so at least one white and at least one black process survives) is the event $\{\tau_b=\infty, \sigma_w=\infty \}$. By independence this has probability
\begin{align}
 \pP[\tau_b=\infty, \sigma_w=\infty]=(1-\gamma^b)(1-\gamma^w)=:r^{\ast}_{b,w}.
\end{align}

Let $G_b,H_w$ be as in the statement. On the event $\{\tau_b=\infty, \sigma_w=\infty \}$ we have at least one surviving process of each kind. Conditional on this, since each black process survives with probability $1-\gamma$ independently of the other, the number of surviving black processes is distributed like $G_b$. Similarly the number of surviving white processes on this event is distributed like $H_w$. Recall the scaling limit in Lemma \ref{lem:onecolour}, which holds for each surviving process separately. Therefore, on this event,
\begin{align}
 g_{b,w}(t)=\frac{e^{-(2p-1)t}\sum_{k=1}^{G_b}B_{k_i}(t)}{e^{-(2p-1)t}\sum_{k=1}^{G_b}B_{k_i}(t)+e^{-(2p-1)t}\sum_{k=1}^{H_w}W_{k_i}(t)} \xrightarrow{a.s.} \frac{U}{U+V},
\end{align}
as $t\to \infty$, where $U\sim \Gamma(G_b,1/\beta), V\sim \Gamma(H_w,1/\beta)$. But then $\frac{U}{U+V}\sim \text{Beta}(G_b,H_w)$.
\end{proof}

\begin{figure}
\centering
\subfloat{\includegraphics[width=0.45\textwidth]{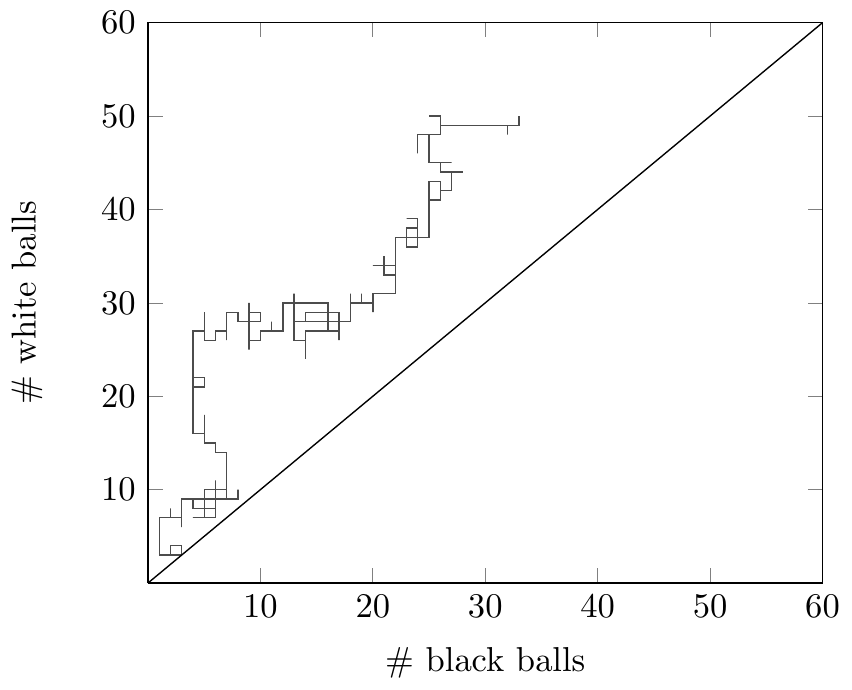}} 
\qquad 
\subfloat{\includegraphics[width=0.45\textwidth]{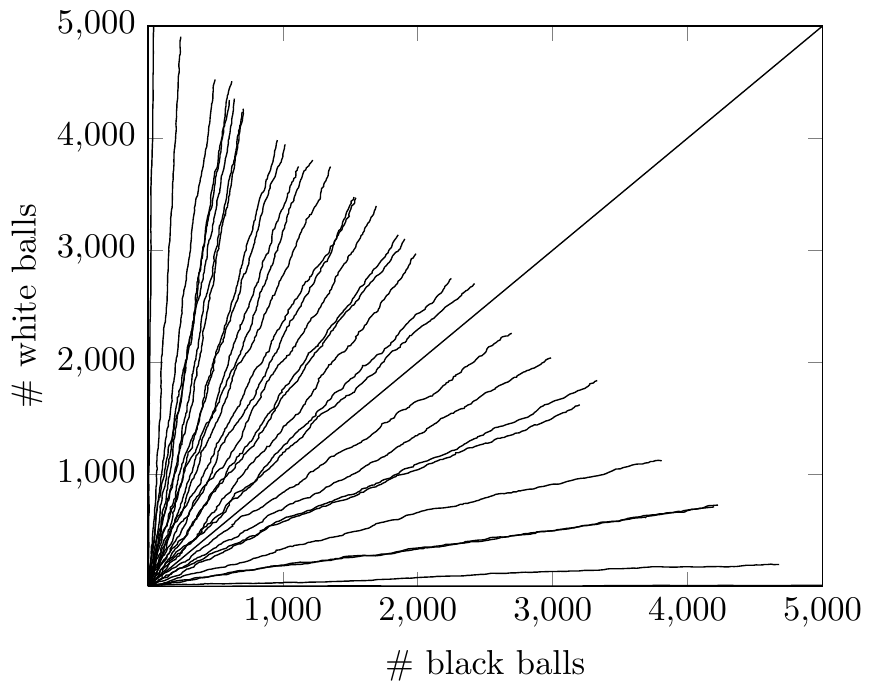}}
\caption{The left--most figure is a simulation of a single instance of the urn model with $(b,w)=(2,3)$ and $p=3/5$, with $300$ drawings. The second one is a simulation of 30 trials with 20 000 drawings each, and parameters $(b,w)=(2,3)$ and $p=3/4$. In particular, note that diagonal--crossing and absorption tends to happen early, if at all. The simulations were done in MatLab.}
\label{fig1}
\end{figure}

Proposition \ref{prop:limdist} immediately transfers to the discrete--time urn model.

\begin{theorem}\label{thm:disclimdist}
 With the same notation as in Proposition \ref{prop:limdist}, $\lim_{n\to \infty}f_{b,w}(n)$ exists almost surely, with distribution given by the mixture
\begin{align}
 r_{b,w} \delta_0 + r^{\ast}_{b,w}\text{Beta}(G_b,H_w) + r_{w,b}\delta_1.
\end{align}
\end{theorem}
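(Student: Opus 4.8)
The plan is to transfer the almost sure convergence of the continuous-time quantity $g_{b,w}(t)$ established in Proposition \ref{prop:limdist} to its discrete-time analogue $f_{b,w}(n)$, exploiting the fact — already noted in the text preceding the proposition — that the two processes have the same transition probabilities via Athreya's embedding. Concretely, let $(B_i(t))_{i=1}^b$ and $(W_i(t))_{i=1}^w$ be the independent birth--death processes of the continuous model, set $S(t)=\sum_{i=1}^b B_i(t)+\sum_{i=1}^w W_i(t)$ for the total ball count, and let $T_n=\inf\{t : S(t)=b+w+n\}$ be the time of the $n$th event (birth or death). Then $(B_n,W_n) := \left(\sum_i B_i(T_n),\sum_i W_i(T_n)\right)$ is, in distribution, exactly the discrete urn process, so $f_{b,w}(n) = g_{b,w}(T_n)$ in joint distribution over $n$.

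The key step is therefore to show $T_n \xrightarrow{a.s.} \infty$ on the event of non-extinction, so that taking $n\to\infty$ is the same as taking $t\to\infty$ along the (discrete, random) sequence of event times. I would argue exactly as in the proof of Theorem \ref{thm:asymp}: couple the finite collection of subcritical-in-the-recoloring-sense birth--death processes (each with birth rate $p$, death rate $1-p$) to independent pure birth (Yule) processes with birth rate $p$, dominating each component; since a finite sum of Yule processes does not explode in finite time, neither does $S(t)$, hence $T_n\to\infty$ almost surely. On the complementary event — that one (or both) of the black/white populations dies out — the process $f_{b,w}(n)$ is absorbed at $0$ or $1$ after finitely many steps, and likewise $g_{b,w}(t)$ is absorbed at the same boundary value after finitely many events; so on that event the discrete and continuous limits agree trivially and carry the same point-mass weights $r_{b,w}$, $r_{w,b}$ computed in Proposition \ref{prop:limdist}.

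Putting the two cases together: on $\{\tau_b=\infty,\sigma_w=\infty\}$ we have $f_{b,w}(n)=g_{b,w}(T_n)\to \lim_{t\to\infty}g_{b,w}(t)$ since $T_n\to\infty$, and this limit is $\mathrm{Beta}(G_b,H_w)$-distributed with weight $r^\ast_{b,w}$; on the extinction events $f_{b,w}(n)$ reaches and stays at $0$ (resp. $1$) with weight $r_{b,w}$ (resp. $r_{w,b}$). Hence $\lim_n f_{b,w}(n)$ exists almost surely with exactly the claimed mixture distribution $r_{b,w}\delta_0 + r^\ast_{b,w}\mathrm{Beta}(G_b,H_w) + r_{w,b}\delta_1$.

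The only real obstacle is the non-explosion argument justifying $T_n\to\infty$ — everything else is bookkeeping. Even this is mild: the coupling to a Yule process is standard, and one could alternatively invoke \cite[III]{AthreyaNeyBook} directly, noting that a birth--death process with positive net drift $2p-1>0$ is non-explosive because its birth rates grow only linearly in the population size. A minor point to handle carefully is that the embedding identity $f_{b,w}(n) = g_{b,w}(T_n)$ is an equality in distribution of the whole sequences, not a pathwise statement, but since we are only asserting existence and identifying the law of the limit, distributional equality of the sequences suffices.
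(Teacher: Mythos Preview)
Your approach is essentially identical to the paper's: both invoke the embedding $f_{b,w}(n)=g_{b,w}(T_n)$ and reduce the claim to showing $T_n\xrightarrow{a.s.}\infty$ on the survival event via the non-explosion/Yule-coupling argument already given in the proof of Theorem~\ref{thm:asymp}. One small slip to fix: your formula $T_n=\inf\{t:S(t)=b+w+n\}$ is not the time of the $n$th event, since deaths decrease $S(t)$; you simply want the $n$th jump time of $t\mapsto S(t)$, which is what your verbal description says and what the remainder of your argument actually uses.
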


\begin{proof}
Let $T_n$ be the time of the $n$:th birth or death in the process $\left(\sum_{i=1}^{b}B_i(t),\sum_{i=1}^{w}W_i(t) \right)$, conditional on survival. Since $f_{b,w}(n)=g_{n,w}(T_n)$, it suffices to show that $T_n\xrightarrow{a.s.} \infty$ as $n\to \infty$. The proof of this is similar to the argument in the proof of Lemma \ref{lem:onecolour}. 
\end{proof}

It is natural to extend the above results to $k$ colours rather than $2$. This corresponds to projecting our infinite urn model onto $k$ fixed colours. Namely, consider an urn with $k$ colours, and let $X_i(n)$ denote the number of balls of colour $i$ at time $n$, with initial condition $X_i(0)>0$. At each time step, draw a ball uniformly at random. With probability $p$, replace it to the urn along with a ball of the same colour. With probability $1-p$, remove it from the urn. Let $S(n)=X_1(n)+\cdots X_k(n)$. With the same method as above, it is not difficult to show, on the event of non--extinction of the urn, that
\begin{align}
 (X_1(n)/S(n),\dots, X_k(n)/S(n)) \stackrel{d}{\to} (Y_1,\dots, Y_k)
\end{align}
where each $Y_i$ is distributed as a mixture of Dirichlet distributions (possibly degenerate to subspaces), the parameters of which will be binomially distributed conditioned on not all being zero. We leave the details for the reader.

We remark that $p=1$ gives us the original P\'{o}lya urn model. Indeed, for $p=1$ we have $\gamma=0$, and it is readily checked that the limit in Theorem \ref{thm:disclimdist} reduces to $\text{Beta}(b,w)$, which is a classical result in the literature. We also point out that all birth--death processes go extinct almost surely in the case $p\leq 1/2$, so we would have convergence of $g_{b,w}(t)$ and $f_{b,w}(n)$ to a convex combination of two point masses at $0$ and $1$. However, even such urns have been studied in the literature, see e.g. \cite{KubaPanholzer2012} which studies a class of \emph{diminishing} urn processes. For instance, our urn model with $p=0$ may be seen as sampling without replacement.

Let us return to the supercritical case $p>1/2$ and concentrate on the event that there is an equal number of black and white balls in the urn. It is easy to show that this occurs at most finitely many times.

\begin{corollary}\label{cor:finchanges}
With probability $1$, the event $B_n=W_n$ occurs for at most finitely many $n$.
\end{corollary}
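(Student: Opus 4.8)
The plan is to deduce this from Theorem~\ref{thm:disclimdist} applied to the projection of the infinite urn onto two fixed colours. The event $B_n = W_n$ is exactly the event $f_{b,w}(n) = 1/2$. By Theorem~\ref{thm:disclimdist}, $f_{b,w}(n)$ converges almost surely to a limit $L$ whose distribution is the mixture $r_{b,w}\delta_0 + r^\ast_{b,w}\text{Beta}(G_b,H_w) + r_{w,b}\delta_1$. The key observation is that this limiting distribution is \emph{continuous at $1/2$}: the atoms sit at $0$ and $1$, and the $\text{Beta}(G_b,H_w)$ component (conditionally on $G_b, H_w$) has a density, hence assigns zero mass to the single point $1/2$. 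Therefore $\pP[L = 1/2] = 0$, so almost surely $L \neq 1/2$.

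First I would fix the starting configuration; in the present context (two distinguished colours inside the infinite urn, each beginning life with a single ball) one takes $b = w = 1$, but the argument is insensitive to this. Next I would invoke Theorem~\ref{thm:disclimdist} to get $f_{b,w}(n) \xrightarrow{a.s.} L$ with $\pP[L = 1/2] = 0$. Then, on the almost sure event $\{L \neq 1/2\}$, pick $\epsilon > 0$ with $|L - 1/2| > \epsilon$; by convergence there is a (random) finite $n_0$ such that $|f_{b,w}(n) - 1/2| > \epsilon/2$ for all $n \geq n_0$, so in particular $f_{b,w}(n) \neq 1/2$, i.e. $B_n \neq W_n$, for all $n \geq n_0$. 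Hence $\{B_n = W_n\}$ occurs only for $n < n_0$, which is finitely many times.

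I do not expect any serious obstacle here: the corollary is essentially immediate once one notes that the limiting law has no atom at $1/2$. The only point requiring a word of care is the transition from ``$L = 1/2$ has probability zero'' to ``$f_{b,w}(n) = 1/2$ only finitely often,'' which is the standard fact that if a sequence converges to a limit that almost surely avoids a given value, then the sequence equals that value only finitely often --- and this uses that $f_{b,w}$ takes values in the discrete set $\{j/(j+k)\}$ only incidentally; the argument above via an $\epsilon$-neighbourhood works regardless. One should also note that on the complementary event where the urn dies (one colour vanishes), $L \in \{0,1\}$ and again $B_n = W_n$ fails for all large $n$, so the conclusion holds on the whole probability space.
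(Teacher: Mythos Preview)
Your proposal is correct and follows essentially the same route as the paper: invoke Theorem~\ref{thm:disclimdist}, observe that the limiting law has no atom at $1/2$ (the Beta component is absolutely continuous and the point masses sit at $0$ and $1$), and conclude that a sequence converging almost surely to a limit avoiding $1/2$ can equal $1/2$ only finitely often. The paper's version is terser, and the corollary is stated for general initial counts $(b,w)$ in the two-colour urn rather than specifically $b=w=1$, but as you note the argument does not depend on this.
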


\begin{proof}
For all possible realisations of $G_b,H_w$, the distribution $\text{Beta}(G_b,H_w)$ is absolutely continuous on $(0,1)$, so the mixture in Theorem \ref{thm:disclimdist} is also absolutely continuous on $(0,1)$. Therefore $\frac{B_n}{W_n+B_n}$ converges almost surely to some constant $c\neq 1/2$. But this means that the fraction equals $1/2$ at most a finite number of times.
\end{proof}

Knowing that $B_n=W_n$ for at most finitely many $n$, it is natural to ask what the probability is that this event occurs at all. Galashin \cite{Galashin2014} and Antal, Ben--Naim and Krapivsky \cite{AntalBenNaimKrapivsky2010} considered this problem for $p=1$, i.e. the classical P\'{o}lya urn model. They view the process $(B_n,W_n)$ as a random walk on $\Z^2$. For $p=1$ this process can only go right or up, so it is possible to count the number of lattice paths from a given starting point to a fixed point on the diagonal. Using the fact that these paths are exchangeable and summing over all points on the diagonal, it is possible to bound the probability that the process ever hits the diagonal. For $p<1$ there are infinitely many paths to any point on the diagonal, so this approach is not possible here, but an argument given by Wallstrom \cite{Wallstrom2012} for the case $p=1$ can easily be adapted to our setting. 

\begin{proposition}\label{prop:eqprob}
Suppose $b>w$. Using the same notation as in Proposition \ref{prop:limdist}, let $F_{b,w}$ be the distribution function of the mixture $r_{b,w} \delta_0 + r^{\ast}_{b,w}\text{Beta}(G_b,H_w) + r_{w,b}\delta_1$. Let $P(b,w)$ denote the probability that $\frac{B_n}{B_n+W_n}=\frac{1}{2}$ for some $n$. Then
\begin{align}
 P(b,w)=2F_{b,w}(1/2).
\end{align}
\end{proposition}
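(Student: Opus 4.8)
The plan is to exploit a reflection/symmetry argument for the process $(B_n,W_n)$, in the spirit of Wallstrom's argument for $p=1$ mentioned in the text. Suppose $b>w$, so the black colour starts in the lead. The key observation is that the evolution of the urn, viewed as a sequence of recoloured draws, is completely symmetric in the two colours: the transition probabilities depend only on the pair $(B_n,W_n)$ and are invariant under swapping black and white. So if the process ever reaches a diagonal state $B_n=W_n$, then from that point onward the two colours are exchangeable, and conditional on hitting the diagonal, the final limit $\lim f_{b,w}(m)$ is symmetric about $1/2$; in particular it is strictly below $1/2$ with the same probability as strictly above $1/2$, and the limit equals $1/2$ with probability zero (the limiting distribution is absolutely continuous on $(0,1)$ with atoms only at the endpoints, by Theorem~\ref{thm:disclimdist} and Corollary~\ref{cor:finchanges}).

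With that symmetry in hand I would decompose the event $\{\lim f_{b,w}(n)<1/2\}$ according to whether the diagonal is ever hit. Since $b>w$, reaching a state with fewer black than white balls, and in particular reaching the limit strictly below $1/2$, forces the process to pass through the diagonal $B_n=W_n$ first (the process changes each coordinate by $\pm 1$, so $B_n-W_n$ changes by $0$ or $\pm 1$ at each step and cannot jump from positive to negative without taking the value $0$). Hence $\{\lim f_{b,w}(n)<1/2\}\subseteq\{\exists n: B_n=W_n\}$, i.e. this event is contained in the event whose probability is $P(b,w)$. Conversely, on the event that the diagonal is hit, let $\sigma$ be the (first) hitting time; conditioning on $\mathcal F_\sigma$, the strong Markov property plus the black/white symmetry gives that $\lim f_{b,w}(n)$ is conditionally symmetric about $1/2$, so $\pP[\lim f_{b,w}(n)<1/2 \mid \text{diagonal hit}] = \pP[\lim f_{b,w}(n)>1/2 \mid \text{diagonal hit}] = 1/2$, the remaining null probability going to the event $\{\lim = 1/2\}$ which has probability $0$.

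Putting the pieces together: $F_{b,w}(1/2) = \pP[\lim f_{b,w}(n) \le 1/2] = \pP[\lim f_{b,w}(n) < 1/2]$ (again using absolute continuity at $1/2$), and this last probability equals $\pP[\text{diagonal hit}]\cdot \tfrac12 = \tfrac12 P(b,w)$ by the previous paragraph, since $\{\lim<1/2\}$ occurs exactly when the diagonal is hit \emph{and} black ends up the loser. Rearranging gives $P(b,w) = 2F_{b,w}(1/2)$, as claimed.

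The main obstacle I anticipate is making the symmetry argument fully rigorous at the level of the discrete process rather than hand-waving it: one must check that after the diagonal hitting time $\sigma$ the conditional law of the future, given $\mathcal F_\sigma$, depends only on the common value $B_\sigma=W_\sigma$ and is invariant under the colour swap, so that the conditional law of $\lim f_{b,w}(n)$ is genuinely symmetric about $1/2$ — including a careful treatment of the absorbing boundary cases where one colour has already died before time $\sigma$ (if a colour has died the diagonal can never be reached unless both are already extinct, a degenerate situation). A secondary point is confirming that $\lim f_{b,w}(n)$ assigns zero mass to $\{1/2\}$, but that follows immediately from Corollary~\ref{cor:finchanges} together with the absolute continuity in Theorem~\ref{thm:disclimdist}, so it is routine.
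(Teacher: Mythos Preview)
Your proof is correct and follows essentially the same route as the paper's: define the first hitting time of the diagonal, use the Markov property plus black/white symmetry at that time to conclude the limit is conditionally symmetric about $1/2$, and observe that $\{\varphi<1/2\}\subseteq\{\text{diagonal hit}\}$ since $B_n-W_n$ starts positive and moves by $\pm 1$. The paper's argument is a bit terser but identical in substance (and is likewise attributed to Wallstrom); your only harmless slip is that $B_n-W_n$ in fact changes by exactly $\pm 1$, never by $0$.
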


\begin{proof}
Let $\varphi$ be the random limit of $f_{b,w}(n)=\frac{B_n}{B_n+W_n}$, and let $\cE=\inf \{n\geq 0 \ : \ f_{b,w}(n)=\frac{1}{2}\}$. Since $\pP[\varphi=1/2]=0$, we have $\{\cE<\infty\}=\{\cE<\infty, \varphi>1/2\} \cup \{\cE<\infty, \varphi<1/2\}$. The process $(B_n,W_n)_{n=\cE+1}^{\infty}$ is Markovian, so it depends only on $(B_{\cE},W_{\cE})$. Thus, on the event $\cE<\infty$, the limit $\varphi$ is chosen according to $F_{B_{\cE},B_{\cE}}$, which is symmetrical around $1/2$. Therefore $\pP[\cE<\infty, \varphi<1/2]=\pP[\cE<\infty, \varphi>1/2]$. But since $b>w$ we have that $\{\varphi<1/2\}\subseteq \{\cE<\infty\}$, so $\pP[\cE<\infty,\varphi<1/2]=\pP[\varphi<1/2]$. Thus
\begin{align}
 P(b,w)=\pP[\cE<\infty] = 2\pP[\varphi<1/2]=2F_{b,w}(1/2).
\end{align}
\end{proof}

Later on it will be useful to have a bound on the probability $P(b,1)$ for large $b$. This will gives us the probability that a new colour (which starts with a single ball) in the infinite--colour urn model ever catches up with an old colour. In the next lemma we show that this probability decreases at least exponentially in $b$.
\begin{lemma}\label{lem:ineq}
The equalisation probability $P(b,1)$ satisfies the inequality
\begin{align}
 P(b,1)\leq 2\left(\frac{1}{2p}\right)^b
\end{align}
\end{lemma}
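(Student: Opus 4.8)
The plan is to use the exact formula $P(b,1)=2F_{b,1}(1/2)$ from Proposition~\ref{prop:eqprob} and bound $F_{b,1}(1/2)$ explicitly. With $w=1$ the random variable $H_w=H_1$ is deterministically equal to $1$ (a $\text{Bin}(1,1-\gamma)$ conditioned on not being zero), so the continuous part of the mixture is $r^{\ast}_{b,1}\text{Beta}(G_b,1)$, while the point mass at $0$ has weight $r_{b,1}=\gamma^b\left(1-\frac{b}{b+1}\gamma\right)$ and the point mass at $1$ has weight $r_{1,b}=\gamma\left(1-\frac{1}{b+1}\gamma^b\right)$. Since $\text{Beta}(k,1)$ has distribution function $x\mapsto x^k$ on $[0,1]$, we get a completely explicit expression
\begin{align}
 F_{b,1}(1/2)=r_{b,1}+r^{\ast}_{b,1}\sum_{k=1}^{b}\pP[G_b=k]\left(\tfrac12\right)^k,
\end{align}
and hence $P(b,1)=2r_{b,1}+2r^{\ast}_{b,1}\,\E\!\left[2^{-G_b}\right]$. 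So the whole problem reduces to bounding the two terms $r_{b,1}$ and $r^{\ast}_{b,1}\E[2^{-G_b}]$ by something of order $(1/2p)^b$.

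First I would dispose of the point-mass term: $r_{b,1}=\gamma^b\left(1-\frac{b}{b+1}\gamma\right)\le \gamma^b$, and since $p>1/2$ one checks $\gamma=\frac{1-p}{p}<1/2p$ iff $1-p<1/2$, which holds; actually more is true, $\gamma=\frac{1-p}{p}\le \frac{1}{2p}$ with equality only at $p=1/2$, so $r_{b,1}\le (1/2p)^b$ with room to spare. Next, for the continuous term, recall that $(1-\gamma^b)\pP[G_b=k]=\binom{b}{k}(1-\gamma)^k\gamma^{b-k}$, so
\begin{align}
 r^{\ast}_{b,1}\,\E\!\left[2^{-G_b}\right]=(1-\gamma)\sum_{k=1}^{b}\binom{b}{k}(1-\gamma)^k\gamma^{b-k}2^{-k}
 =(1-\gamma)\left[\left(\gamma+\tfrac{1-\gamma}{2}\right)^b-\gamma^b\right].
\end{align}
Here $\gamma+\frac{1-\gamma}{2}=\frac{1+\gamma}{2}$, and since $1+\gamma=1+\frac{1-p}{p}=\frac1p$, this quantity equals $\frac{1}{2p}$. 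Therefore $r^{\ast}_{b,1}\E[2^{-G_b}]=(1-\gamma)\left[(1/2p)^b-\gamma^b\right]\le (1/2p)^b$, again with room.

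Combining, $P(b,1)=2r_{b,1}+2r^{\ast}_{b,1}\E[2^{-G_b}]\le 2\gamma^b+2(1-\gamma)\left[(1/2p)^b-\gamma^b\right]=2(1/2p)^b-2\gamma\left[(1/2p)^b-\gamma^b\right]+2\gamma^b-2(1-\gamma)\gamma^b$; a short simplification shows the correction terms are nonpositive (since $\gamma\le 1/2p$), giving $P(b,1)\le 2(1/2p)^b$ as claimed. The only place where any care is needed is the algebraic identity $\gamma+\frac{1-\gamma}{2}=\frac{1}{2p}$ and the binomial-sum collapse; the rest is monotone bookkeeping. I do not anticipate a genuine obstacle here — the main thing to get right is the reduction $H_1\equiv 1$ and plugging in the $\text{Beta}(k,1)$ CDF, after which the inequality essentially proves itself, and in fact one could sharpen the constant if desired.
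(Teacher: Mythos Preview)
Your proof is correct and follows essentially the same route as the paper: invoke Proposition~\ref{prop:eqprob}, use $H_1\equiv 1$ so the continuous part is $\text{Beta}(G_b,1)$ with CDF $x^k$, collapse the resulting binomial sum via $\gamma+\tfrac{1-\gamma}{2}=\tfrac{1}{2p}$, and bound the point mass by $\gamma^b$. The paper streamlines your last step by bounding $(1-\gamma)\le 1$ immediately and then absorbing the $2\gamma^b$ as the $k=0$ term of the binomial expansion, so that the sum becomes exactly $2\sum_{k=0}^{b}\binom{b}{k}\bigl(\tfrac{1-\gamma}{2}\bigr)^k\gamma^{b-k}=2(1/2p)^b$ in one line; your displayed rearrangement in the final paragraph contains a small arithmetic slip (the two sides differ by $2\gamma^{b+1}$), but the conclusion still follows since $\gamma^b\le(1/2p)^b$.
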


\begin{proof}
By Proposition \ref{prop:eqprob} we have
\begin{align}
 P(b,1)&=2F_{b,1}(1/2)\\
&=2\gamma^{b}\left(1-\frac{b}{b+1} \gamma \right) \\
& \qquad  +2(1-\gamma^{b})(1-\gamma)\sum_{k=1}^{b} k\int_0^{1/2}x^{k-1}(1-x)^{1-1}dx \binom{b}{k}\frac{(1-\gamma)^k\gamma^{b-k}}{1-\gamma^{b}} \\
&\leq 2\gamma^{b}+ 2\sum_{k=1}^{b}\frac{1}{2^k}\binom{b}{k}(1-\gamma)^k\gamma^{b-k} \\
&= 2\sum_{k=0}^{b}\binom{b}{k}\left( \frac{1-\gamma}{2} \right)^k\gamma^{b-k} \\
&=2\left(\frac{1-\gamma}{2}+\gamma\right)^b \\
&=2 \left(\frac{1}{2p}\right)^b.
\end{align}
\end{proof}

We now finally return to the urn model with an infinite number of colours. The bound on $P(b,1)$ gives us good enough control over the probability that a small colour ever overtakes a large colour. Using this bound and the Borel--Cantelli lemma, we show now that there can have been at most a finite number of colours that were ever the leader. The proof relies on Lemma \ref{lem:ineq} and the growth rate in Lemma \ref{lem:onecolour}. 

\begin{proposition}\label{prop:finhubs}
 Almost surely there can have been at most finitely many leading colours.
\end{proposition}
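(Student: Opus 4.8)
The plan is to use the Borel--Cantelli lemma in the continuous--time embedding. First I would set up the framework: at each birth or death event in the infinite urn, a colour may overtake the current leader only when it equalises with it; so it suffices to bound, for each colour $c$ that is ever created, the probability that $c$ ever equalises with the colour that is leading at the moment $c$ is born. The key point is that when colour $c$ is born it has exactly one ball, while by Theorem \ref{thm:asymp} (or Lemma \ref{lem:onecolour} in continuous time) the leading colour, having survived and grown, has a size that tends to infinity; and Lemma \ref{lem:ineq} tells us that the probability a $1$-ball colour ever catches a $b$-ball colour is at most $2(1/2p)^{b}$, which is summable in $b$.

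The main steps, in order, would be: (1) Work in continuous time with the embedded birth--death processes. Let the colours be created at times $t_1<t_2<\dots$ (one new colour at each death event). For the $m$-th colour, let $L_m$ denote a colour that is leading at time $t_m$, and let $b_m$ be the number of balls of $L_m$ at time $t_m$. (2) Show $b_m\to\infty$ almost surely: the total number of balls $N(t)\stackrel{a.s.}{\sim} e^{pt}V$ grows, the number of colours alive at time $t$ is at most linear in $N(t)$ (each colour alive at time $t$ was created at some death event), so the leading colour must have size $\to\infty$; more carefully, by Lemma \ref{lem:onecolour} each fixed colour grows like $e^{(2p-1)t}U$, so a leader at time $t$ has at least order $e^{(2p-1)(1-o(1))t}$ balls, hence $b_m\to\infty$. (3) Condition on the configuration at time $t_m$: given that $L_m$ has $b_m$ balls and colour $c_m$ has $1$ ball, the projection onto these two colours is exactly the two-colour urn of Proposition \ref{prop:eqprob}, so the probability that $c_m$ ever equalises with $L_m$ is $P(b_m,1)\le 2(1/2p)^{b_m}$. (4) Since $b_m\to\infty$ a.s., for $m$ large enough $b_m\ge \epsilon m$ (say, using the linear lower bound on growth of the total), so $\sum_m P(b_m,1)\le \sum_m 2(1/2p)^{\epsilon m}<\infty$; by Borel--Cantelli only finitely many colours ever equalise with, hence ever overtake, the leader in force at their birth. (5) Combine with Corollary \ref{cor:finchanges}: any two fixed colours swap leadership finitely often, and only finitely many colours ever draw level with a then-leader, so after some finite time the set of colours that have ever led is fixed and the leadership stabilises among them; in particular only finitely many distinct colours were ever the unique leader.

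The main obstacle I expect is step (2)/(4): making rigorous and quantitative the claim that the leading colour at the time a new colour is born is already large, uniformly enough that $\sum_m P(b_m,1)<\infty$. One has to be careful that "a colour leading at time $t_m$" is not a single fixed colour but varies with $m$, so one cannot apply Lemma \ref{lem:onecolour} to it directly; instead I would argue via a deterministic-in-the-limit lower bound on the size of \emph{any} leader. Concretely: the number of colours ever created by time $t$ equals the number of death events, which is a.s. $\stackrel{a.s.}{\sim} (1-p)\!\int_0^t N(s)\,ds \asymp e^{pt}$, and the total mass is $\asymp e^{pt}$; but the largest surviving colour, being at least as large as any particular old surviving colour, has size $\gtrsim e^{(2p-1)t}$, which for $p<1$ is $\ll e^{pt}$ — so a counting argument alone is not enough and one genuinely needs the almost-sure growth $e^{(2p-1)t}$ of surviving colours from Lemma \ref{lem:onecolour}. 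Once one fixes a single old surviving colour $c_0$ (which exists a.s. since colours survive with probability $1-\gamma>0$) and notes the leader is at least as big as $c_0$, one gets $b_m\ge (1-o(1))e^{(2p-1)t_m}U_{c_0}$ and then relates $t_m$ to $m$ via $m\asymp e^{pt_m}$, yielding $b_m\gtrsim m^{(2p-1)/p}=m^{1/\beta}\to\infty$, which is more than enough for summability of $2(1/2p)^{b_m}$.
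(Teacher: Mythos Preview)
Your approach is essentially the same as the paper's: bound the probability that the $m$-th new colour ever equalises with the then-leader by $P(b_m,1)\le 2(1/2p)^{b_m}$ via Lemma~\ref{lem:ineq}, lower-bound $b_m$ using the growth of a fixed surviving colour, and apply Borel--Cantelli. The paper formalises the random lower bound on $b_m$ by intersecting with the event $\mathcal{C}_r=\{M_n\ge n^{1/(2\beta)}\text{ for all }n>r\}$, showing $\sum_n\pP[\mathcal{H}_n\cap\mathcal{C}_r]<\infty$, and then letting $r\to\infty$ (using Theorem~\ref{thm:asymp} to get $\pP[\mathcal{C}_r]\to 1$). This is exactly the clean way to turn your almost-sure asymptotic $b_m\gtrsim m^{1/\beta}$ into a deterministic bound suitable for Borel--Cantelli; your ``fix a single surviving colour $c_0$ and use leader $\ge c_0$'' is the same idea and would need the same kind of conditioning to be made rigorous.

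One correction: your step~(4) claim $b_m\ge\epsilon m$ from ``the linear lower bound on growth of the total'' is wrong --- the total grows linearly in $m$, but the leader only grows like $m^{1/\beta}$ with $1/\beta=(2p-1)/p<1$, so no linear lower bound on $b_m$ is available. You catch and fix this yourself in the obstacle discussion, arriving at $b_m\gtrsim m^{1/\beta}$, which is indeed enough for $\sum_m (1/2p)^{b_m}<\infty$; the paper uses the even weaker $n^{1/(2\beta)}$ to build in slack.
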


\begin{proof}
Let $(T_n)_{n=1}^{\infty}$ be the birth times of new colours. Let $\cH_n$ be the event that the colour born at time $T_n$ ever becomes as large as the leading colour at time $T_n$. The joint behaviour of  the sizes of a new colour and the currently leading colour is described by the $2$--dimensional urn model above started from $(M_n,1)$. Now, for any $r\in\N$, we let $\cC_r$ be the event that $M_n \geq n^{1/2\beta}$ for all $n>r$. Note that this implies that $M_n\geq r^{-1/2\beta}n^{1/2\beta}$ for all $n\geq 1$. For each fixed $r$, we have 
\begin{align}
 \pP[\cH_n \cap \cC_r]
\leq \sup_{A\geq r^{-1/2\beta}n^{1/(2\beta)}}P(A,1) \leq  2\left(\frac{1}{2p} \right)^{r^{-1/2\beta}n^{1/2\beta}}.
\end{align}
 
 \begin{align}
  \sum_{n=1}^{\infty} \pP[\cH_n\cap \cC_r]\leq 2\sum_{n=1}^{\infty} \left(\frac{1}{2p}\right)^{r^{-1/2\beta}n^{1/2\beta}}<\infty,
 \end{align}
where the sum converges by e.g. integral comparison. The Borel--Cantelli lemma implies that $\cH_n\cap \cC_r$ occurs for infinitely many $n$ with probability $0$. Recall Theorem \ref{thm:asymp}, i.e. that the number of balls of any fixed colour, conditional on survival, grows like $\nu N_n^{1/\beta}$, where $\nu$ is some positive random variable. Therefore $\pP[\cC_r]\to 1$ as $r\to \infty$, which implies that $\cH_n$ occurs for infinitely many $n$ with probability $0$. Therefore, with probability $1$, only finitely many colours can have been leaders.
\end{proof}

\begin{theorem} \label{thm:persistenthub}
 Almost surely there is a dominating colour.
\end{theorem}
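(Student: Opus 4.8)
The plan is to deduce the existence of a dominating colour from Proposition~\ref{prop:finhubs} (only finitely many colours are ever in the lead) together with Corollary~\ref{cor:finchanges} applied to each pair among those colours. For a colour $c$, write $X^{c}_{n}$ for its number of balls at time $n$, and let $L$ be the random set of colours that attain the maximal value of $X^{\cdot}_{n}$ at some time $n$; by Proposition~\ref{prop:finhubs} the set $L$ is finite almost surely, and by definition the leading colour at every time lies in $L$. Call a colour \emph{surviving} if it never goes extinct, and let $L^{+}\subseteq L$ consist of the surviving colours of $L$. First I would note that $L^{+}\neq\emptyset$ almost surely: otherwise, $L$ being finite, there would be a finite time after which every colour of $L$ has zero balls, contradicting that the leading colour always lies in $L$ and always has at least one ball.

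Next fix two distinct colours $c,c'\in L^{+}$. Projecting the urn onto these two colours yields exactly the two--colour urn of Proposition~\ref{prop:limdist}, started (once both colours have been created) from a configuration with at least one ball of each colour; here one uses that $c$ and $c'$ both survive, so neither count ever returns to $0$. This two--colour urn is therefore never absorbed, so Corollary~\ref{cor:finchanges} applies and shows that $X^{c}_{n}=X^{c'}_{n}$ for at most finitely many $n$. Since at each step at most one of $X^{c}_{n},X^{c'}_{n}$ changes, and only by $\pm 1$, the difference $X^{c}_{n}-X^{c'}_{n}$ changes by $\pm 1$ per step and so cannot change sign without first equalling $0$; hence its sign is eventually constant. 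Thus for each pair in $L^{+}$ there is a random finite time beyond which one of the two is strictly the larger forever. This yields an eventual strict order $\succ$ on the finite set $L^{+}$, which is transitive and hence total; let $c^{*}$ be its maximum, so that $X^{c^{*}}_{n}>X^{c}_{n}$ for every $c\in L^{+}\setminus\{c^{*}\}$ and all large $n$.

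It remains to check that $c^{*}$ dominates. Beyond some finite time every colour in $L\setminus L^{+}$ has zero balls (having gone extinct) while $X^{c^{*}}_{n}\ge 1$, so the leading colour then lies in $L^{+}$, and within $L^{+}$ it must be $c^{*}$ by the previous paragraph. Consequently any colour $c\notin L$ satisfies $X^{c}_{n}<X^{c^{*}}_{n}$, being strictly smaller than the leading colour; and the same strict inequality holds for every $c\in L\setminus\{c^{*}\}$ by the two facts just used. Hence for all large $n$ the colour $c^{*}$ is strictly the largest, i.e. it is a dominating colour. (The asymptotics of $M_{n}$ then follow by applying Theorem~\ref{thm:asymp} to the fixed surviving colour $c^{*}$, but that is not needed here.)

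I expect the points needing the most care to be: the identification of the two--colour projection with the urn of Proposition~\ref{prop:limdist}, so that Corollary~\ref{cor:finchanges} is legitimately applied to a pair of \emph{surviving} colours; the parity remark that two colours can only exchange leadership at a time of equality; and the nonemptiness of $L^{+}$. None of these is deep — the substantive input is Proposition~\ref{prop:finhubs} — so the main obstacle is really organisational: marshalling the finitely many ever-leaders into a single total order and reading off its maximum.
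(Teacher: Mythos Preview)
Your proof is correct and follows the same approach as the paper's, which simply cites Proposition~\ref{prop:finhubs} and Corollary~\ref{cor:finchanges} in two sentences. You have spelled out more of the details the paper leaves implicit---in particular the split into surviving and non-surviving former leaders, the $\pm 1$ parity argument that leadership changes must pass through equality, and the construction of a total eventual order on $L^{+}$---but the skeleton is identical.
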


\begin{proof}
 By Proposition \ref{prop:finhubs} there can have at most a finite number of colours of maximal size, and by Corollary \ref{cor:finchanges} these can have changed leader at most finitely many times. Therefore, with probability $1$, there is a colour that is always the largest after some random but finite time.
\end{proof}

In the corresponding random graph model, this says that almost surely there is a persistent clique--hub. Recall that $M_n$ denotes the size of the leading colour at time $n$ and $N_n$ the total number of balls in the urn at time $n$. Since there almost surely is a dominating colour and we know the growth rate of any fixed colour by Theorem \ref{thm:asymp} (in particular that of the dominating colour), we obtain the following theorem.
\begin{theorem}
 There is a random variable $\mu>0$ such that
\begin{align}
 M_n \stackrel{a.s.}{\sim} \mu N_n^{1/\beta}.
\end{align}
\end{theorem}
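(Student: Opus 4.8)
The plan is to combine the existence of a dominating colour (Theorem \ref{thm:persistenthub}) with the growth rate of a fixed colour (Theorem \ref{thm:asymp}). First I would invoke Theorem \ref{thm:persistenthub}: almost surely there is a (random) colour, call it $\mathfrak{c}$, and a random finite time $n_0$, such that for all $n\geq n_0$ the colour $\mathfrak{c}$ has strictly more balls than any other colour, so that $M_n$ equals the size of $\mathfrak{c}$ at time $n$ for all $n\geq n_0$. In particular $\mathfrak{c}$ survives forever.

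Next I would apply Theorem \ref{thm:asymp} to the colour $\mathfrak{c}$. The only subtlety is that Theorem \ref{thm:asymp} is stated for a \emph{fixed} (deterministic) colour conditional on its survival, whereas $\mathfrak{c}$ is chosen randomly depending on the whole history. To handle this I would note that there are only countably many colours (indexed, say, by their birth order), and for each fixed index $j$, conditional on survival of colour $j$, Theorem \ref{thm:asymp} gives a positive random variable $\nu_j$ with $X_{j,n}\stackrel{a.s.}{\sim}\nu_j N_n^{1/\beta}$. Taking a countable union, almost surely \emph{every} surviving colour $j$ satisfies $X_{j,n}\stackrel{a.s.}{\sim}\nu_j N_n^{1/\beta}$. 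On this probability-one event, the dominating colour $\mathfrak{c}$ is one of these surviving colours, so $M_n$, which agrees with $X_{\mathfrak{c},n}$ for $n\geq n_0$, satisfies $M_n\stackrel{a.s.}{\sim}\mu N_n^{1/\beta}$ with $\mu=\nu_{\mathfrak{c}}>0$ (a positive random variable, since it is a value of one of the $\nu_j$). Changing finitely many initial terms does not affect the asymptotic equivalence, so the restriction $n\geq n_0$ is harmless.

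The main (and essentially only) obstacle is the measurability/selection issue just described: making sure that ``$M_n$ grows like the dominating colour'' is legitimate even though the dominating colour is random. The countable-union argument resolves this cleanly because the colour space is countable. One should also check the mild point that $\mu>0$ almost surely, which follows since each $\nu_j$ is strictly positive on the event of survival of colour $j$ and $\mathfrak{c}$ survives. Everything else is immediate from the previously established results, so the proof is short.

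\begin{proof}
By Theorem \ref{thm:persistenthub}, almost surely there is a colour $\mathfrak{c}$ and a random finite time $n_0$ such that, for all $n\geq n_0$, colour $\mathfrak{c}$ is strictly the largest colour in the urn; in particular $M_n$ equals the number of balls of colour $\mathfrak{c}$ at time $n$ for all $n\geq n_0$, and $\mathfrak{c}$ survives forever. Enumerate the colours (for instance, in order of their creation) by $j\in\N$, and let $X_{j,n}$ denote the number of balls of colour $j$ at time $n$. By Theorem \ref{thm:asymp}, for each fixed $j$, conditional on survival of colour $j$ there is a positive random variable $\nu_j$ with $X_{j,n}\stackrel{a.s.}{\sim}\nu_j N_n^{1/\beta}$. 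Since there are only countably many colours, almost surely every surviving colour $j$ satisfies this asymptotic equivalence simultaneously. On this probability-one event, the dominating colour $\mathfrak{c}$ is one of the surviving colours, so setting $\mu=\nu_{\mathfrak{c}}>0$ we have $X_{\mathfrak{c},n}\stackrel{a.s.}{\sim}\mu N_n^{1/\beta}$. As $M_n=X_{\mathfrak{c},n}$ for all $n\geq n_0$ and modifying finitely many terms does not affect asymptotic equivalence, we conclude $M_n\stackrel{a.s.}{\sim}\mu N_n^{1/\beta}$.
\end{proof}
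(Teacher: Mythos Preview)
Your proof is correct and follows the same approach as the paper, which simply observes that the dominating colour exists by Theorem \ref{thm:persistenthub} and then applies Theorem \ref{thm:asymp} to it. You have been more careful than the paper in addressing the subtlety that the dominating colour is random rather than fixed, resolving it via the countable-union argument; the paper leaves this point implicit.
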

By taking logarithms, this also implies the strengthening of the result of Backhausz and M\'{o}ri mentioned in the introduction.

\section{Acknowledgements}
The author thanks Svante Janson for suggesting numerous improvements to the present manuscript, and Katja Gabrysch and C\'{e}cile Mailler for helpful discussions.

\bibliographystyle{abbrv}

\bibliography{urnmodelfinal}
\end{document}